\newtheorem{theorem}{Theorem}[section]
\newtheorem{lemma}[theorem]{Lemma}
\newtheorem{corollary}[theorem]{Corollary}
\newtheorem{proposition}[theorem]{Proposition}
\newtheorem{definition}[theorem]{Definition}
\newenvironment{proof}{\noindent\textsc{Proof: }}
{\hspace{\stretch{1}}$\Box$\medskip}
\begin{document}

\title{Cohomological Ramsey Theory}

\author{
  Alexander Engstr\"om\footnote{The author is Miller Research Fellow 2009-2012 at UC Berkeley, and gratefully acknowledges support from the Adolph C. and Mary Sprague Miller Institute for Basic Research in Science.}
  \\
Department of Mathematics \\
UC Berkeley \\ 
\texttt{alex@math.berkeley.edu}
}

\date\today

\maketitle

\begin{abstract}
We show that the vanishing of certain cohomology groups of polyhedral complexes imply upper bounds on Ramsey numbers. Lov\'asz bounded the chromatic numbers of graphs
using Hom complexes. Babson and Kozlov proved Lov\'asz conjecture and developed a Hom complex theory. We generalize the Hom complexes to Ramsey complexes.

The main theorem states that if certain cohomology groups of the Ramsey complex ${\tt Ram}( \partial \Delta_{p^k}, \Sigma)$ are trivial, then the vertices of the simplicial complex $\Sigma$ cannot be $n$-colored such that every color correspond to a face of $\Sigma$. In a corollary, we give an explicit description of the Ramsey complexes used for upper bounds on Ramsey numbers.
\end{abstract}

\section{Introduction}

In this paper we show that the cohomology of polyhedral complexes can be used for upper bounds on Ramsey numbers.
Lov\'asz proved that the chromatic number of a graph can be bounded by the connectivity of polyhedral complexes \cite{L}. 
Later Babson and Kozlov \cite{BK} proved a conjecture by Lov\'asz that extended the complexes used to find chromatic numbers to certain Hom complexes. The neighborhood complexes used by Lov\'asz originally are special cases of the Hom complexes.

We define a generalization of the Hom complexes, called Ramsey complexes. A Ramsey complex ${\tt Ram}(\Sigma_1,\Sigma_2)$ is defined using two simplicial complexes while the Hom complex ${\tt Hom}(G_1,G_2)$ used two graphs. The generalization is \emph{not} that two-dimensional simplicial complexes corresponds to graphs, but that the simplicial complexes correspond to independence complexes of graphs. This is our main theorem:
\newline
\newline
\noindent  {\bf Theorem \ref{theTheorem}}\emph{
Let $\Sigma$ be a simplicial complex and $p^k$ a prime power.
If  
\[ \tilde{H}^i(  Ram( \partial \Delta_{p^k}, \Sigma)  ;\mathbb{Z}_p)=0\]
 for all $i \leq (n-1)(p^k-1)-1$ then
the vertices of $\Sigma$ cannot be $n$-colored such that each color is a face of $\Sigma$. 
}
\newline
\newline
Setting $p^k=2$ and $\Sigma$ to the independence complex of a graph, one recovers Lov\'asz result.

The Ramsey number $R(G;n)$ is the smallest number $N$ such that any edge-coloring of $K_N$ with $n$ colors will have a one-colored copy of $G$. To prove upper bounds for Ramsey numbers $R(G;n)$ we use a special type of Ramsey complexes called Rainbow complexes. They have an explicit description.
\newline
\newline
\noindent  {\bf Definition \ref{def:rain}}\emph{
The \emph{Rainbow complex to prove that $R(G;n)\leq N$ using $m$,}
is a polyhedral subcomplex of the product
\[ \prod_{i=1}^m \Delta_{E(K_N)}. \]
The vertices of the simplex $\Delta_{E(K_N)}$ are indexed by the edges of $K_N$. Any vertex of the product
corresponds to a function
\[ \eta : \{1,2,\ldots, m\} \rightarrow E(K_N). \]
The Rainbow complex is the induced subcomplex on the vertices $\eta$ such that $G$ is a subgraph of 
$\eta( \{1,2, \ldots,  m\} ).$
}
\newline
\newline
\noindent  {\bf Corollary \ref{cor:rain}}\emph{
Let $X$ be the Rainbow complex to prove that $R(G;n)\leq N$ using $m$. If $m=p^k$ is a prime power
and $\tilde{H}^i( X   ;\mathbb{Z}_p)=0$ for all $i \leq (n-1)(p^k-1)-1$, then  $R(G;n)\leq N$.
}

\section{The Ramsey complex}

For basic notions of topological combinatorics not defined in the text, we refer to Kozlov's book \cite{K}.

Most of the polyhedral complexes in this paper are subcomplexes of products of simplices. A simplex with vertex set $V$ is denoted $\Delta_V$, and
a product of them indexed by the set $I$ is
\[ X=\prod_{i \in I} \Delta_V. \]
The vertices of $X$ are indexed by functions $\eta: I \rightarrow V$, and the cells are indexed by functions $\nu: I \rightarrow 2^V \setminus \emptyset$.
By abuse of notation we write $\eta\in X$ and $\nu\in X$. A cell $\nu\in X$ is in the \emph{induced subcomplex of $X$ with vertex set $E$} if 
\[ \eta(i)\in \nu(i)\textrm{ for all }i\in I \]
implies that $\eta\in E$.

The main player of the paper is the Ramsey complex. It is a new type of polyhederal complex that generalize older constructions.
\begin{definition}
Let $\Sigma_1$ and $\Sigma_2$ be two finite simplicial complexes. The \emph{Ramsey complex} ${\tt Ram}(\Sigma_1,\Sigma_2)$ is
the induced subcomplex of
\[ \prod_{v \in \Sigma_1^0} \Delta_{\Sigma_2^0} \]
with vertex set
\[  \{ \eta:  \Sigma_1^0 \rightarrow \Sigma_2^0 \mid \eta^{-1}(\sigma)\in \Sigma_1\textrm{ for all }\sigma\in \Sigma_2 \}. \]
\end{definition}

A \emph{graph homomorphism} $G_1\rightarrow G_2$ is a map $\eta: V(G_1)\rightarrow V(G_2)$ such that $\eta(u)\eta(v)$ is an edge of $G_2$ if
$uv$ is an edge of $G_1$.
\begin{definition} Let $G_1$ and $G_2$ be graphs. The \emph{Hom complex} ${\tt Hom}(G_1,G_2)$ is the induced subcomplex of
\[  \prod_{v \in V(G_1)} \Delta_{V(G_2)} \]
with vertex set
\[  \{ \eta: V(G_1)\rightarrow V(G_2) \mid \eta \textrm{ is a graph homomorphism }G_1 \rightarrow G_2 \}. \]
\end{definition}

The Hom complex was introduced by Lov\'asz to generalize the neighborhood complex he used in the proof of Kneser's conjecture \cite{L}.
Babson and Kozlov created a theory of Hom complexes and proved Lov\'asz conjecture \cite{BK}. \v{C}uki\'c and Kozlov \cite{Cu} explained the Hom complexes with
complete graphs in the second component.
The strength of different topological tests for graphs was studied by
Csorba \cite{C}, and combinatorial versions of Lov\'asz proof was studied by Ziegler \cite{Z}. The homotopy theoretic perspective was investigated by Dochtermann \cite{D}.
Extending an idea by Zivaljevic \cite{Z}, Schultz found new proofs and explained more how topological tests works. In the last part of Kozlov's book \cite{K} on
combinatorial algebraic topology most of this is surveyed.

\begin{definition} Let $\Sigma$ be a simplicial complex and $V$ a set. The \emph{Partition complex} ${\tt Part}(\Sigma,V)$ is the induced subcomplex of
\[  \prod_{v \in \Sigma^0} \Delta_{V} \]
with vertex set
\[  \{ \eta: \Sigma^0 \rightarrow V \mid \eta^{-1}(v)\in \Sigma\textrm{ for all }v\in V \} \]
\end{definition}

A set of vertices $I$ of the graph $G$ is \emph{independent} if no vertices in $I$ are adjacent.
\begin{definition} Let $G$ be a graph. The \emph{independence complex} ${\tt Ind}(G)$ is a simplicial complex with the same vertex set as $G$ and its
independent sets as faces.
\end{definition}

\begin{proposition} If $G_1$ and $G_2$ are graphs then \[ {\tt Hom}(G_1,G_2)={\tt Ram}({\tt Ind}(G_1),{\tt Ind}(G_2)).\]
\end{proposition}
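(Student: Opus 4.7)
The plan is to observe that both sides of the asserted equality are induced subcomplexes of the very same ambient polyhedral complex, so that proving the statement reduces to a purely combinatorial check that their vertex sets coincide. Concretely, since $\mathtt{Ind}(G_i)^0 = V(G_i)$ for $i=1,2$, the ambient product
\[ \prod_{v \in V(G_1)} \Delta_{V(G_2)} = \prod_{v \in \mathtt{Ind}(G_1)^0} \Delta_{\mathtt{Ind}(G_2)^0} \]
used in the definition of $\mathtt{Hom}(G_1,G_2)$ is the same as the one used for $\mathtt{Ram}(\mathtt{Ind}(G_1),\mathtt{Ind}(G_2))$. Two induced subcomplexes of the same polyhedral complex that share a vertex set are equal, so the theorem collapses to a vertex-set identification.

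Accordingly, the second step is to show that a map $\eta : V(G_1) \to V(G_2)$ is a graph homomorphism if and only if $\eta^{-1}(\sigma) \in \mathtt{Ind}(G_1)$ for every $\sigma \in \mathtt{Ind}(G_2)$. For the forward direction I would take a homomorphism $\eta$ and an independent set $\sigma$ of $G_2$, and argue by contradiction: if $u,v \in \eta^{-1}(\sigma)$ were adjacent in $G_1$, then $\eta(u)\eta(v)$ would have to be an edge of $G_2$ with both endpoints inside the independent set $\sigma$, which is impossible.

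For the reverse direction I would argue the contrapositive. If $uv$ is an edge of $G_1$ but $\eta(u)\eta(v)$ is not an edge of $G_2$, then the set $\sigma = \{\eta(u),\eta(v)\}$ (of size one or two) is independent in $G_2$, yet $\eta^{-1}(\sigma)$ contains the edge $uv$ and is therefore not independent in $G_1$, contradicting the hypothesis.

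The only subtle point — and the step I would be most careful about — is handling the possibility that $\eta(u)=\eta(v)$, which blurs the distinction between ``no edge'' and ``loop.'' In the forward direction, this case needs the convention that a simple graph has no loops, so $\eta(u)\eta(v)$ fails to be an edge and the homomorphism condition is violated. In the reverse direction, the singleton $\{\eta(u)\}$ is independent and still witnesses the failure. Once this is disposed of, the rest is purely definitional bookkeeping.
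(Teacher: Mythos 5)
Your proposal is correct and follows the same route as the paper: reduce to comparing vertex sets of induced subcomplexes of the same ambient product, then check that preimages of independent sets being independent is equivalent to being a graph homomorphism. The paper states this equivalence in one line, whereas you spell out both directions and the $\eta(u)=\eta(v)$ edge case; that extra care is welcome but not a different argument.
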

\begin{proof}
The function $\eta$ is a vertex of ${\tt Ram}({\tt Ind}(G_1),{\tt Ind}(G_2))$ if
\[ \eta^{-1}(\sigma)\in {\tt Ind}(G_1)\textrm{ for all }\sigma\in {\tt Ind}(G_2). \]
That the inverse of independent sets are independent is the same as that $\eta$ is a graph homomorphism.
The $\eta \in  {\tt Hom}(G_1,G_2)$ are the graph homomorphisms.

\end{proof}

The simplicial complex of $n$ disjoint vertices $1,2,\ldots, n$ is denoted $D_n$. The $(n-1)$--dimensional simplex on $\{1,2,\ldots, n\}$ is denoted $\Delta_n$.
\begin{proposition} If $\Sigma$ is a simplicial complex, then ${\tt Part}(\Sigma,\{1,2,\ldots,n\})={\tt Ram}(\Sigma,D_n)$.
\end{proposition}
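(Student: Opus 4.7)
The plan is to unfold the two definitions and observe that they describe the same induced subcomplex of the same ambient polyhedral complex, reducing the proposition to a direct comparison of vertex sets. First I would note that $D_n^0 = \{1,2,\ldots,n\}$, so both ${\tt Part}(\Sigma,\{1,2,\ldots,n\})$ and ${\tt Ram}(\Sigma, D_n)$ live inside the product
\[ \prod_{v \in \Sigma^0} \Delta_{\{1,2,\ldots,n\}}. \]
Since an induced subcomplex of a fixed polyhedral complex is determined by its vertex set, it is enough to check that the two vertex sets agree.

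Next I would enumerate the faces of $D_n$. Because $D_n$ consists of $n$ disjoint vertices, its faces are exactly the empty set together with the $n$ singletons $\{v\}$ for $v \in \{1,2,\ldots,n\}$. The Ramsey condition $\eta^{-1}(\sigma) \in \Sigma$ for all $\sigma \in D_n$ therefore splits as the automatic requirement $\eta^{-1}(\emptyset) = \emptyset \in \Sigma$ together with the requirements $\eta^{-1}(\{v\}) \in \Sigma$ for $v = 1,\ldots,n$. The latter coincides verbatim with the condition defining the vertex set of the Partition complex ${\tt Part}(\Sigma,\{1,2,\ldots,n\})$.

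There is no real obstacle beyond careful bookkeeping; the key observation is simply that the only nontrivial faces of $D_n$ are its singleton vertices, so the two vertex-set conditions become syntactically identical. I would conclude by remarking that, since the vertex sets coincide and the ambient product is the same, the two induced subcomplexes are equal as polyhedral complexes, which is the desired equality.
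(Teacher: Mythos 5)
Your proof is correct and is exactly the comparison of definitions that the paper's one-line proof ("Compare the definitions") leaves implicit: same ambient product, and the Ramsey condition over the faces of $D_n$ reduces to the singleton conditions defining the Partition complex. No issues.
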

\begin{proof}
Compare the definitions.
\end{proof}

One particular instance of the two previous partitions is that ${\tt Hom}(G,K_n)={\tt Part}({\tt Ind}(G),\{1,2,\ldots,n\})={\tt Ram}({\tt Ind}(G),D_n)$. In \cite{E} most results on\linebreak
${\tt Hom}(G,K_n)$ were extended to ${\tt Part}(\Sigma,\{1,2,\ldots,n\})$ with more transparent\linebreak proofs, but without any condition on $\Sigma$. 

\begin{proposition} \label{prop:comp}
If the vertices \mbox{$\eta_1 \in {\tt Ram}(\Sigma_1,\Sigma_2)$} and \mbox{$\eta_2 \in {\tt Ram}(\Sigma_2,\Sigma_3)$} then\linebreak $\eta_2\eta_1 \in {\tt Ram}(\Sigma_1,\Sigma_3)$.
\end{proposition}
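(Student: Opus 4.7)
The plan is to verify the defining condition of ${\tt Ram}(\Sigma_1,\Sigma_3)$ for the composite map $\eta_2\eta_1$ by a direct computation with preimages. First I would note that $\eta_2\eta_1$ has the correct domain and codomain, namely $\Sigma_1^0 \to \Sigma_3^0$, so that it is at least a candidate vertex of the ambient product $\prod_{v \in \Sigma_1^0} \Delta_{\Sigma_3^0}$.

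The key step is the elementary set-theoretic identity
\[ (\eta_2\eta_1)^{-1}(\tau) = \eta_1^{-1}\bigl(\eta_2^{-1}(\tau)\bigr) \]
for any $\tau \in \Sigma_3$. Because $\eta_2 \in {\tt Ram}(\Sigma_2,\Sigma_3)$, the hypothesis applied to $\tau$ gives that $\sigma := \eta_2^{-1}(\tau)$ is a face of $\Sigma_2$. Then, because $\eta_1 \in {\tt Ram}(\Sigma_1,\Sigma_2)$, applying its defining property to this particular $\sigma \in \Sigma_2$ yields $\eta_1^{-1}(\sigma) \in \Sigma_1$. Chaining these two facts through the identity above gives $(\eta_2\eta_1)^{-1}(\tau) \in \Sigma_1$, which is exactly the vertex condition for ${\tt Ram}(\Sigma_1,\Sigma_3)$.

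There is essentially no genuine obstacle here; the proposition is a formal consequence of the fact that taking preimages is contravariantly functorial and that the definition of a Ramsey vertex is phrased exactly in terms of preimages of faces. The only thing to be slightly careful about is that the condition must be checked for \emph{every} $\tau \in \Sigma_3$, but this is automatic since the argument above works uniformly in $\tau$.
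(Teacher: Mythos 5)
Your argument is correct and is essentially identical to the paper's proof: both verify the vertex condition for ${\tt Ram}(\Sigma_1,\Sigma_3)$ via the identity $(\eta_2\eta_1)^{-1}(\tau)=\eta_1^{-1}(\eta_2^{-1}(\tau))$, applying the defining property of $\eta_2$ and then of $\eta_1$. No issues.
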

\begin{proof}
For any $\sigma\in \Sigma_3,$
\[ \eta_2^{-1}(\sigma)\in \Sigma_2\textrm{ since }\eta_2 \in {\tt Ram}(\Sigma_2,\Sigma_3),\]
\[ (\eta_2\eta_1)^{-1}(\sigma)= \eta_1^{-1}( \eta_2^{-1}(\sigma))\in \Sigma_1\textrm{ since }\eta_1 \in {\tt Ram}(\Sigma_1,\Sigma_2).\]
\end{proof}

Now we define group actions on the Ramsey complexes. As for Hom complexes, we could do this in both the first and the second component. We could also give a category theory interpretation of Proposition~\ref{prop:comp} as done by Schultz for Hom complexes in Proposition 2.9 of \cite{S}. But we refrain from doing so, and keep to what's
needed in this paper.

For any group $\Gamma \subseteq \mathcal{S}_{\Sigma_1^0}$ that extends to a simplicial action on $\Sigma_1$, there is an induced $\Gamma$--action on ${\tt Ram}(\Sigma_1,\Sigma_2)$ by
\[ (\gamma,\eta) \rightarrow \eta\gamma \textrm{ for }\gamma\in \Gamma\textrm{ and }\eta\in {\tt Ram}(\Sigma_1,\Sigma_2)^0.\]

\begin{proposition} \label{prop:doubleEqui}
If ${\tt Ram}(\Sigma_2,\Sigma_3)$ is non-empty, and $\Gamma \subseteq \mathcal{S}_{\Sigma_1^0}$ extend to a simplicial action on $\Sigma_1$, then
there is a $\Gamma$--equivariant map
\[ f : {\tt Ram}(\Sigma_1,\Sigma_2) \rightarrow {\tt Ram}(\Sigma_1,\Sigma_3) \]
\end{proposition}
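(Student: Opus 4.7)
My plan is to construct $f$ as post-composition with any fixed vertex of ${\tt Ram}(\Sigma_2,\Sigma_3)$, promoted to a cellular map of the ambient products of simplices, and then verify that it restricts to the relevant induced subcomplexes and respects the $\Gamma$--action.

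First, I use the non-emptiness hypothesis to pick a vertex $\eta_2 \in {\tt Ram}(\Sigma_2,\Sigma_3)^0$. On vertices I set $f(\eta_1) = \eta_2 \eta_1$; Proposition~\ref{prop:comp} guarantees $f(\eta_1)\in {\tt Ram}(\Sigma_1,\Sigma_3)^0$. To extend $f$ to the whole complex, I use the natural cellular map induced by the set map $\eta_2 : \Sigma_2^0 \to \Sigma_3^0$: the simplex map $\Delta_{\Sigma_2^0}\to \Delta_{\Sigma_3^0}$ sending a face $\tau\subseteq \Sigma_2^0$ to $\eta_2(\tau)\subseteq \Sigma_3^0$ induces a cellular map of the products, and on a cell $\nu_1 : \Sigma_1^0 \to 2^{\Sigma_2^0}\setminus \emptyset$ it acts by $f(\nu_1)(v) = \eta_2(\nu_1(v))$.

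The main point is to show that $f(\nu_1)$ lies in the induced subcomplex ${\tt Ram}(\Sigma_1,\Sigma_3)$. Using the induced-subcomplex criterion from the beginning of the section, I need to check that whenever $\eta'(v) \in \eta_2(\nu_1(v))$ for all $v\in \Sigma_1^0$, the function $\eta'$ is a vertex of ${\tt Ram}(\Sigma_1,\Sigma_3)$. For each $v$, choose a preimage $\eta(v)\in \nu_1(v)$ with $\eta_2(\eta(v)) = \eta'(v)$. Since $\nu_1 \in {\tt Ram}(\Sigma_1,\Sigma_2)$, the induced-subcomplex criterion applied to $\nu_1$ gives $\eta \in {\tt Ram}(\Sigma_1,\Sigma_2)^0$. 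Then $\eta' = \eta_2 \eta$, and Proposition~\ref{prop:comp} yields $\eta' \in {\tt Ram}(\Sigma_1,\Sigma_3)^0$, as required. This preimage-lifting step is where I expect the only real content of the argument to sit.

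Finally, equivariance is a one-line check: for $\gamma\in \Gamma$ and $\eta_1 \in {\tt Ram}(\Sigma_1,\Sigma_2)^0$, one has $f(\eta_1\gamma) = \eta_2(\eta_1\gamma) = (\eta_2\eta_1)\gamma = f(\eta_1)\gamma$, and the same identity extends cellwise because the $\Gamma$--action on both products is by the same permutation of factors indexed by $\Sigma_1^0$. Since $f$ is built from the product of identical simplex maps in each factor, it commutes with the reindexing by $\gamma$, so $f$ is $\Gamma$--equivariant.
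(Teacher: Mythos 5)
Your proof is correct and follows the same route as the paper: fix a vertex $\eta_2\in{\tt Ram}(\Sigma_2,\Sigma_3)$, define $f$ by post-composition using Proposition~\ref{prop:comp}, and verify equivariance from $f(\eta_1\gamma)=(\eta_2\eta_1)\gamma$. The only difference is that you spell out the cellular extension and the preimage-lifting check that the image cell lies in the induced subcomplex, a detail the paper leaves implicit by declaring the map induced from its action on vertices.
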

\begin{proof}
Fix an element $\eta_2 \in {\tt Ram}(\Sigma_2,\Sigma_3)$. Define, using Proposition~\ref{prop:comp}, the map $f$ as $f(\eta_1)=\eta_2\eta_1$. The $\Gamma$--actions on
${\tt Ram}(\Sigma_1,\Sigma_2)$ and ${\tt Ram}(\Sigma_1,\Sigma_3)$ are induced from the action on their vertices. For any $\gamma \in \Gamma,$
\[ (\gamma,f(\eta_1))=f(\eta_1)\gamma=\eta_2\eta_1\gamma=f(\eta_1\gamma)=f((\gamma,\eta_1)).\]
\end{proof}

In the theory of Hom complexes \cite{BK} the concept of a test graphs is important. Lov\'asz used an edge as test graph, and Babson and Kozlov proved Lov\'asz conjecture that
also odd cycles can be used \cite{BK}. We use simplicial complexes instead of graphs, and the complex $\Sigma_1$ in the map of Proposition~\ref{prop:doubleEqui} is the
possible test complex. The independence complex of Lov\'asz test graph $K_2$ is $\partial \Delta_2$, and in the Ramsey complexes we use the generalization $\Sigma_1=
\partial \Delta_m$.

\begin{proposition}\label{prop:fpf}
If $\Gamma  \subseteq \mathcal{S}_{\partial \Delta_m^0}$ acts transitively on $\{1,2,\ldots m\}$, then it acts without fixed-points on ${\tt Ram}(\partial \Delta_m, \Sigma)$ for all
non-empty $\Sigma$.
\end{proposition}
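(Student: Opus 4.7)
My plan is to argue by contradiction. I suppose some cell $\nu$ of ${\tt Ram}(\partial \Delta_m, \Sigma)$ is simultaneously fixed by every element of $\Gamma$; this is exactly the obstruction that a common fixed point of the $\Gamma$--action would produce. By the definition of the $\Gamma$--action, $\gamma \cdot \nu = \nu$ for all $\gamma \in \Gamma$ unpacks to the requirement $\nu(\gamma(i)) = \nu(i)$ for every $\gamma \in \Gamma$ and every $i \in \{1, 2, \ldots, m\}$, so $\nu$ is constant on the $\Gamma$--orbits of $\{1, 2, \ldots, m\}$.

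The next step is to invoke transitivity: for any pair $i, j \in \{1, 2, \ldots, m\}$, pick $\gamma \in \Gamma$ with $\gamma(i) = j$, and the previous equality forces $\nu(j) = \nu(i)$. Hence $\nu$ is constant, $\nu(i) = A$ for all $i$, for some non-empty $A \subseteq \Sigma^0$ (non-empty since $\nu$ takes values in $2^{\Sigma^0} \setminus \emptyset$).

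Now I pick any vertex $v \in A$ and define the constant map $\eta : \{1, 2, \ldots, m\} \to \Sigma^0$ with $\eta(i) = v$. Since $\eta(i) \in \nu(i)$ for every $i$, the induced--subcomplex clause in the definition of ${\tt Ram}(\partial \Delta_m, \Sigma)$ forces $\eta$ to be a vertex of the Ramsey complex, which means $\eta^{-1}(\sigma) \in \partial \Delta_m$ for every $\sigma \in \Sigma$. But $v \in \Sigma^0$ gives $\{v\} \in \Sigma$ (using that $\Sigma$ is non-empty and contains its vertices as faces), while $\eta^{-1}(\{v\}) = \{1, 2, \ldots, m\}$ is not a proper subset of itself and therefore does not lie in $\partial \Delta_m$. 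This is the contradiction.

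The only conceptual point to be careful about is recognizing that transitivity of $\Gamma$ precisely collapses the constraint $\nu(\gamma(i)) = \nu(i)$ to $\nu$ being constant; once that reduction is made, the non-emptiness of $\Sigma$ and the definition of $\partial \Delta_m$ yield the contradiction mechanically, so I do not foresee a serious obstacle beyond tracking the action and the induced--subcomplex clause.
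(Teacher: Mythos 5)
Your proof is correct and follows essentially the same strategy as the paper: reduce a fixed point to a $\Gamma$--invariant cell $\nu$, use transitivity to show the constant vertex $\eta \equiv v$ must lie in that cell, and derive a contradiction from $\eta^{-1}(\{v\}) = \{1,\ldots,m\} \notin \partial\Delta_m$. Your intermediate step (showing $\nu$ itself is constant) is a slightly cleaner packaging than the paper's orbit-tracking of a vertex $\eta'$, but the argument is the same.
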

\begin{proof}
Say that there is a fixed point. Then this point is included in a cell $\nu \in {\tt Ram}(\partial \Delta_m, \Sigma)$ such that $\{ \eta\gamma \mid \eta\in \nu \} = \{ \eta\in \nu \}$
for every $\gamma \in \Gamma$. We could assume this since the Ramsey complexes are induced from their one-skeletons. 
Let $\eta'$ be a vertex of $\nu$ and $v$ a vertex of $\Sigma$. By the transitivity of $\Gamma$,
\[ E=\{\eta'\gamma \mid \gamma\in \Gamma \} \subseteq \{ \eta \in \nu \} \]
and there is a $\eta \in E$ with $\eta(u)=v$ for every $u\in \partial \Delta_m^0.$ The vertex $\tilde{\eta}$ defined by $\tilde{\eta}(u)=v$ for all $v$ is also in the cell $\nu$ by
the definition of the Ramsey complex. But $\tilde{\eta}^{-1}(v)=\partial \Delta_m^0$ and $\tilde{\eta}\not\in \partial \Delta_m$, which is a contradiction.
\end{proof}

The previous proposition could also have been proved using the geometrical realization we are about to define now.

Now we define the geometrical realization of ${\tt Ram}(\partial \Delta_m, D_n)$ that is used in the rest of the paper.
The complex ${\tt Ram}(\partial \Delta_m, D_n)$ is a subcomplex of
\[ \prod_{i=1}^m \Delta_n =  \left\{ (x_{ij}) \in \mathbb{R}^{m(n-1)} \left| \begin{array}{l} 
x_{ij} \geq 0\textrm{ for all }1\leq i\leq m \textrm{ and } 1\leq j\leq n-1 
\\   x_{i1}+x_{i2}+\cdots+x_{i(n-1)}=1\textrm{ for all } 1\leq i \leq m
 \end{array} \right. \right\}. \]
 The vertex $\eta$ is realized as
 \[ x_{ij} = \left\{\begin{array}{cl}
 1 & \eta(i)=j, \\
 0 & \eta(i)\neq j. 
  \end{array} \right. \]
The smallest cell $\nu$ containing a point $x$ is defined by
\[ \nu(i)= \left\{ 1\leq j \leq n \,\, \left| \begin{array}{l}
\textrm{if $j<n$ then $x_{ij}>0$} \\
\textrm{if $j=n$ then $x_{i1}+x_{i2}+\cdots+x_{i(n-1)}<1$}
\end{array}\right.
\right\}.\]
An action of  $\Gamma\subseteq\mathcal{S}_{\partial \Delta_m}=\mathcal{S}_m$ on ${\tt Ram}(\partial \Delta_m, D_n)$ is realized component-wise as
\[ \gamma(x_{ij})=x_{\gamma(i)j}\]
for $\gamma \in \Gamma$.

In equivariant obstruction theory one usually maps a complex into a space minus a diagonal. The obstruction occurs when something lands on the diagonal.
The \emph{diagonal} $\mathbb{D}^{m,n-1}$ is
\[ \{ (x_{ij}) \in \mathbb{R}^{m(n-1)}  \mid x_{1j}=x_{2j}= \cdots =x_{mj}\textrm{ for all }1\leq j\leq n-1 \}. \]

\begin{lemma}\label{lemma:intoDiag}
If $\Gamma \subseteq S_m$,  then $\Gamma$ acts on ${\tt Ram}(\Delta_m,T_n)$, and on $\mathbb{R}^{m(n-1)} \setminus \mathbb{D}^{m,n-1}$; and 
the there is an equivariant inclusion map
\[ {\tt Ram}(\partial \Delta_m,D_n) \rightarrow \mathbb{R}^{m(n-1)} \setminus \mathbb{D}^{m,n-1}. \]
\end{lemma}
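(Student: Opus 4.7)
The plan is to verify three items in turn: that $\Gamma$ acts on each side, that the geometric realization of ${\tt Ram}(\partial \Delta_m, D_n)$ inside $\mathbb{R}^{m(n-1)}$ literally misses the diagonal, and that the resulting inclusion is $\Gamma$-equivariant.

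First I would dispose of the two actions. The action on ${\tt Ram}(\partial \Delta_m, D_n)$ is a special case of the general Ramsey-complex action set up just before Proposition~\ref{prop:doubleEqui}: every permutation of $\{1,\ldots,m\}$ preserves the face poset of $\partial \Delta_m$ (which consists of all proper subsets), so every $\Gamma \subseteq S_m$ extends to a simplicial action on $\Sigma_1 = \partial \Delta_m$. For the action on $\mathbb{R}^{m(n-1)}$, the formula $\gamma(x_{ij}) = x_{\gamma(i),j}$ simply permutes the $i$-indexed blocks of $(n-1)$ coordinates; this clearly preserves $\mathbb{D}^{m,n-1}$ (where $(x_{ij})$ does not depend on $i$), and hence also preserves its complement.

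The heart of the argument is showing that the realization avoids the diagonal. Using the explicit formula for the smallest cell $\nu$ containing a point $x$ recorded just above the lemma, one sees that $\nu(i)$ depends only on the $i$th row $(x_{ij})_{j}$ of $x$. If $x \in \mathbb{D}^{m,n-1}$ then all rows agree, so $\nu(1) = \nu(2) = \cdots = \nu(m) =: S$. Pick any $j \in S$ and consider the constant function $\tilde{\eta} : \{1,\ldots,m\} \to \{1,\ldots,n\}$ with $\tilde{\eta}(i) = j$ for every $i$. Then $\tilde{\eta}(i) \in \nu(i)$ for all $i$, so if $\nu$ were a cell of the \emph{induced} subcomplex ${\tt Ram}(\partial \Delta_m, D_n)$ then $\tilde{\eta}$ would necessarily be one of its vertices. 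But $\tilde{\eta}^{-1}(\{j\}) = \{1,\ldots,m\} \notin \partial \Delta_m$, so $\tilde{\eta}$ is not a vertex of the Ramsey complex, contradiction. Hence no point of the realization of ${\tt Ram}(\partial \Delta_m, D_n)$ lies on $\mathbb{D}^{m,n-1}$, and the set-theoretic inclusion into $\mathbb{R}^{m(n-1)} \setminus \mathbb{D}^{m,n-1}$ is legitimate.

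For equivariance it suffices to check agreement on vertices and then extend linearly on each cell. The realization of the vertex $\eta$ has $x_{ij} = 1$ precisely when $\eta(i)=j$. The action $(\gamma,\eta) \mapsto \eta\gamma$ replaces $\eta$ by the function $i \mapsto \eta(\gamma(i))$, whose realization has a $1$ in position $(i,j)$ exactly when $\eta(\gamma(i)) = j$, that is, precisely at $x_{\gamma(i),j}$, matching the $\Gamma$-action on $\mathbb{R}^{m(n-1)}$. The main obstacle is the diagonal-avoidance step: one must use the induced-subcomplex definition of the Ramsey complex (not merely its vertex set) to convert the hypothetical diagonal point into a forbidden constant vertex $\tilde{\eta}$.
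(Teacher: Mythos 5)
Your proof is correct and follows essentially the same route as the paper: realize the complex inside $\prod_i \Delta_n$, use the induced-subcomplex property to show that a cell meeting the diagonal would force a constant vertex $\tilde{\eta}$, and contradict $\{1,\ldots,m\}\notin\partial\Delta_m$. Your observation that $\nu(i)$ is independent of $i$ on the diagonal cleanly merges the paper's two cases (all row sums zero versus some coordinate positive), and your explicit equivariance check on vertices is a welcome addition the paper leaves implicit.
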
  
\begin{proof}
If $\gamma$ is a permutation on $S_m$ then it acts on $\mathbb{R}^{m(n-1)}$ by
\[ x_{ij} \rightarrow x_{\gamma(i)j}. \]
Now assume that $x$ is in the intersection of  $\mathbb{D}^{m,n-1}$ and ${\tt Ram}(\partial \Delta_m,D_n)$.
If any $x_{ij}$ would be negative or larger than one, then $x$ would be outside ${\tt Ram}(\partial \Delta_m,D_n)$ since it is not even in $\prod_{i=1}^m \Delta_n$.
\begin{itemize}
\item[1.] If $x_{k1}+x_{k2}+\ldots+x_{k(n-1)}=0$ for all $1\leq k\leq m$, then $x$ is the vertex
\[ \eta: \{1,2,\ldots, m\} \rightarrow \{1,2,\ldots, n\} \textrm{ defined by } \eta(i)=j \]
where $j=n$.
\item[2.] If $x_{k1}+x_{k2}+\ldots+x_{k(n-1)}>0$, then $x_{kj}>0$ for some $j$. From $x\in \mathbb{D}^{m,n-1}$ we get that
\[ x_{1j}=x_{2j}=\cdots=x_{mj} >0. \]
This shows that $x$ is in a cell $\nu\in {\tt Ram}(\Delta_m,T_n)$ with $j\in \nu(i)$ for all $1\leq i \leq m$. One vertex of $\nu$ is
\[ \eta: \{1,2,\ldots, m\} \rightarrow \{1,2,\ldots, n\} \textrm{ defined by } \eta(i)=j. \]
\end{itemize}
The vertex $j$ is a cell of $D_m$, so $\eta^{-1}(j)=\{1,2, \ldots, m\} \in \partial \Delta_m$. But that is a contradiction, and we can conclude that
no point of ${\tt Ram}(\partial \Delta_m,D_n)$ is on the diagonal $\mathbb{D}^{m,n-1}$.
\end{proof}

This lemma was used by Volovikov \cite{V} in his proof of the topological Tverberg theorem in the prime power case. It follows from using
$\mathbb{Z}_p^{k}$--equivariant cohomology with coefficients in $\mathbb{Z}_p$, and that $\mathbb{R}^{m(n-1)} \setminus \mathbb{D}^{m,n-1}$
is homotopy equivalent to a sphere of dimension $(n-1)(m-1)-1$.
\begin{lemma}\label{lemma:volovikov}
Let $\mathbb{Z}_p^{k}$ act on the CW complex $X$ without fixed points, and assume that there is a  $\mathbb{Z}_p^{k}$--equivariant map
\[ X \rightarrow  \mathbb{R}^{m(n-1)} \setminus \mathbb{D}^{m,n-1} \]
where $m=p^k$ is a prime power. 
If  $\tilde{H}^i(X;\mathbb{Z}_p)=0$ for all $i\leq l$, then $l<(n-1)(m-1)-1$.
\end{lemma}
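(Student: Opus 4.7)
The plan is to reduce the assertion to the claim that a fixed-point-free $G=\mathbb{Z}_p^k$-CW complex admitting a $G$-equivariant map to a free $G$-sphere $S^N$ cannot satisfy $\tilde{H}^i(\,\cdot\,;\mathbb{Z}_p)=0$ through $i=N$, and then to establish this via the Serre spectral sequences of the two associated Borel constructions. The prime-power hypothesis $m=p^k$ enters at exactly one point: it guarantees that the relevant sphere bundle over $BG$ has a nonzero Euler class.

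First I would reinterpret the target. The diagonal $\mathbb{D}^{m,n-1}\subseteq \mathbb{R}^{m(n-1)}$ is an $(n-1)$-dimensional linear subspace, and because $m=p^k$ the subgroup $G=\mathbb{Z}_p^k\subseteq S_m$ acts freely on $\{1,\ldots,m\}$ by its regular representation, so $\mathbb{D}^{m,n-1}$ is precisely the $G$-fixed locus. Taking orthogonal complements yields a $G$-equivariant decomposition $\mathbb{R}^{m(n-1)}=\mathbb{D}^{m,n-1}\oplus W$ with $\dim W=(m-1)(n-1)$ and $G$ acting freely on $W\setminus\{0\}$. Radial projection delivers a $G$-equivariant deformation retraction
\[ \mathbb{R}^{m(n-1)}\setminus \mathbb{D}^{m,n-1}\;\simeq\; S(W)=S^N,\qquad N=(n-1)(m-1)-1, \]
and composing with the hypothesized map produces a $G$-equivariant $f\colon X\to S^N$ between fixed-point-free $G$-spaces.

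Next I would pass to the Borel constructions $X_G=EG\times_G X$ and $(S^N)_G=EG\times_G S^N$, both fibered over $BG$, and compare their mod $p$ Serre spectral sequences. In the sequence for $X_G\to BG$, the connectivity hypothesis forces $E_2^{s,t}=0$ for $1\leq t\leq l$ (the coefficient module $H^t(X;\mathbb{Z}_p)$ is zero irrespective of monodromy). A bidegree check shows that no differential can reach $E^{i,0}$ for $i\leq l+1$: any incoming $d_r$ would need $r-1\geq l+1$, which pushes its source to a negative column. Consequently the edge map $\pi^{*}\colon H^i(BG;\mathbb{Z}_p)\to H^i(X_G;\mathbb{Z}_p)$ is an isomorphism for $i\leq l$ and remains injective at $i=l+1$.

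For the sphere bundle $\rho\colon(S^N)_G\to BG$, the transgression $\tau\in H^{N+1}(BG;\mathbb{Z}_p)$ of the generator of $H^N(S^N;\mathbb{Z}_p)$ is the Euler class of the associated $S^N$-bundle. This $\tau$ is nonzero: otherwise the spectral sequence collapses and $H^{*}(BG;\mathbb{Z}_p)$ embeds into $H^{*}((S^N)_G;\mathbb{Z}_p)\cong H^{*}(S^N/G;\mathbb{Z}_p)$, which is finite, contradicting that $H^{*}(B\mathbb{Z}_p^k;\mathbb{Z}_p)$ is nonzero in arbitrarily high degrees (this is where $m=p^k$ is essential). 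Naturality through $F\colon X_G\to (S^N)_G$ over $BG$ gives $\pi^{*}=F^{*}\rho^{*}$, and the Gysin sequence of $\rho$ forces $\rho^{*}\tau=0$, hence $\pi^{*}\tau=0$. If $l\geq N$ then $\pi^{*}$ is injective in degree $N+1$, forcing $\tau=0$, a contradiction; therefore $l<(n-1)(m-1)-1$. The only genuinely hard step is the nonvanishing of $\tau$; everything else is standard spectral-sequence bookkeeping.
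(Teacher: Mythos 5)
The paper does not actually prove this lemma; it is imported as a black box from Volovikov's paper on the topological Tverberg theorem, so your attempt is being measured against the literature rather than against an argument in the text. Your reduction to the representation sphere is correct: $\mathbb{D}^{m,n-1}$ is the fixed locus of the $(n-1)$-fold permutation representation, the complement $W$ is $(n-1)$ copies of the reduced regular representation of $G=\mathbb{Z}_p^k$, and the target retracts equivariantly onto $S(W)=S^N$ with $N=(n-1)(m-1)-1$. The spectral-sequence bookkeeping on the $X$ side (injectivity of the edge map $\pi^*$ through degree $l+1$, the Gysin identity $\rho^*\tau=0$, and naturality $\pi^*=F^*\rho^*$) is also fine.

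The genuine gap is your parenthetical claim that $G$ acts \emph{freely} on $W\setminus\{0\}$, and the nonvanishing argument for $\tau$ that you build on it. For $k\geq 2$ the reduced regular representation of $\mathbb{Z}_p^k$ splits into nontrivial irreducibles each of which has a proper subgroup in its kernel, so nontrivial group elements fix nonzero vectors of $W$; indeed $\mathbb{Z}_p\times\mathbb{Z}_p$ admits no free action on any sphere. Consequently $(S^N)_G$ is not homotopy equivalent to $S^N/G$, the finiteness of $H^*(S^N/G;\mathbb{Z}_p)$ is irrelevant, and your proof that the transgression $\tau$ is nonzero collapses --- precisely at the step you yourself flag as the only hard one. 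This is exactly why the prime-power case is delicate and why the paper cites Volovikov instead of running the elementary free-action argument that works for $k=1$ (or $m$ prime). The lemma is nevertheless true and your outline is repairable: since $W^G=0$, the class $\tau$ is the mod-$p$ Euler class of $W$, which factors as the product of the Euler classes of its nontrivial irreducible summands; each of these is a nonzero element of the polynomial part of $H^*(B\mathbb{Z}_p^k;\mathbb{Z}_p)$ ($w_1$ of a nontrivial character for $p=2$, $c_1$ of a nontrivial complex character for odd $p$), and that polynomial subring is an integral domain, so $\tau\neq 0$. Alternatively one can invoke the localization theorem for the fixed-point-free action of $\mathbb{Z}_p^k$ on $S(W)$ to rule out collapse of the spectral sequence. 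Either repair must be supplied explicitly; as written the proof is incorrect for all $k\geq 2$, which is the entire point of the prime-power generality.
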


This is the main result of the paper.
\begin{theorem}\label{theTheorem}
Let $\Sigma$ be a simplicial complex and $p^k$ a prime power.
If  
\[ \tilde{H}^i(  {\tt Ram}( \partial \Delta_{p^k}, \Sigma)  ;\mathbb{Z}_p)=0\]
 for all $i \leq (n-1)(p^k-1)-1$ then
the vertices of $\Sigma$ cannot be $n$-colored such that each color is a face of $\Sigma$. 
\end{theorem}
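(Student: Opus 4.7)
The plan is to argue by contraposition: assume $\Sigma$ admits an $n$-coloring whose color classes are faces, and derive that some $\tilde{H}^i({\tt Ram}(\partial \Delta_{p^k}, \Sigma);\mathbb{Z}_p)$ in the range $i \leq (n-1)(p^k-1)-1$ must be nonzero. The whole proof will be an assembly of the machinery developed earlier in the section, driven by one key observation about the second component $D_n$.

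First I would translate the hypothesis into the language of Ramsey complexes. An $n$-coloring $c : \Sigma^0 \to \{1,2,\ldots,n\}$ such that every color class $c^{-1}(i)$ is a face of $\Sigma$ is precisely a vertex of ${\tt Ram}(\Sigma, D_n)$, because the faces of $D_n$ are exactly the empty set and the singletons, so the condition $\eta^{-1}(\sigma)\in \Sigma$ for all $\sigma\in D_n$ reduces to $c^{-1}(i)\in \Sigma$ for each color $i$. Consequently ${\tt Ram}(\Sigma, D_n)$ is non-empty.

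Next, take $\Gamma = \mathbb{Z}_p^k$ acting on the vertex set $\{1,2,\ldots,p^k\}$ of $\partial \Delta_{p^k}$ by the regular representation; this action is transitive and extends to a simplicial action on $\partial \Delta_{p^k}$. Applying Proposition~\ref{prop:doubleEqui} with $\Sigma_1 = \partial \Delta_{p^k}$, $\Sigma_2 = \Sigma$, $\Sigma_3 = D_n$ yields a $\Gamma$-equivariant map
\[ {\tt Ram}(\partial \Delta_{p^k}, \Sigma) \longrightarrow {\tt Ram}(\partial \Delta_{p^k}, D_n). \]
Composing with the $\Gamma$-equivariant inclusion from Lemma~\ref{lemma:intoDiag} produces a $\Gamma$-equivariant map
\[ {\tt Ram}(\partial \Delta_{p^k}, \Sigma) \longrightarrow \mathbb{R}^{p^k(n-1)} \setminus \mathbb{D}^{p^k, n-1}. \]
Proposition~\ref{prop:fpf} guarantees that the $\Gamma$-action on the domain is fixed-point free, since $\Sigma$ is non-empty and $\Gamma$ acts transitively on $\partial \Delta_{p^k}^0$.

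Finally I would invoke Volovikov's Lemma~\ref{lemma:volovikov} with $X = {\tt Ram}(\partial \Delta_{p^k}, \Sigma)$ and $m = p^k$: if $\tilde{H}^i(X;\mathbb{Z}_p)=0$ for all $i \leq l$, then $l < (n-1)(p^k-1)-1$. Under the cohomological hypothesis of the theorem we would have $l = (n-1)(p^k-1)-1$, a contradiction. Hence no such coloring exists. The only non-routine step, and the real reason the proof works, is identifying $D_n$ as the correct ``complete-graph analogue'' so that a proper coloring of $\Sigma$ becomes an actual vertex of a Ramsey complex; everything else is a direct application of Propositions~\ref{prop:doubleEqui} and~\ref{prop:fpf} and Lemmas~\ref{lemma:intoDiag} and~\ref{lemma:volovikov}.
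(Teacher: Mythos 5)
Your proposal is correct and follows essentially the same route as the paper: assume a coloring exists, observe that it is a vertex of ${\tt Ram}(\Sigma,D_n)$, compose the equivariant map from Proposition~\ref{prop:doubleEqui} with the inclusion of Lemma~\ref{lemma:intoDiag}, use Proposition~\ref{prop:fpf} for fixed-point freeness, and contradict Lemma~\ref{lemma:volovikov}. You are in fact slightly more explicit than the paper, which leaves the appeal to Proposition~\ref{prop:doubleEqui} and the translation of the coloring hypothesis implicit.
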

\begin{proof}
Assume to the contrary that ${\tt Ram}(\Sigma, D_n)$ is non-empty.
By Proposition \ref{prop:fpf} and Lemma~\ref{lemma:intoDiag} there are fixed-point free
$\mathbb{Z}_p^k$--actions and equivariant maps
\[ {\tt Ram}( \partial \Delta_{p^k},\Sigma) \rightarrow {\tt Ram}( \partial \Delta_{p^k}, D_n)  \rightarrow  \mathbb{R}^{p^k(n-1)} \setminus \mathbb{D}^{p^k,n-1} \]
But then it contradicts Lemma~\ref{lemma:volovikov} that $\tilde{H}^i(  {\tt Ram}( \partial \Delta_{p^k}, \Sigma)  ;\mathbb{Z}_p)=0$ for all $i \leq (n-1)(p^k-1)-1$.
\end{proof}

\begin{corollary}[Lov\'asz \cite{L}]
If ${\tt Hom}(K_2,G)$ is $(n-2)$--connected then the chromatic number of the graph $G$ is larger than $n$.
\end{corollary}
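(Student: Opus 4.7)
The plan is to derive the corollary as a direct specialization of Theorem~\ref{theTheorem}, with $p=2$, $k=1$, and $\Sigma = {\tt Ind}(G)$. First I would identify $\partial \Delta_2$ with ${\tt Ind}(K_2)$: the simplex $\Delta_2$ is the 1-simplex on $\{1,2\}$, so its boundary is two disjoint vertices, which is exactly the independence complex of an edge. Combining this with the earlier proposition ${\tt Hom}(G_1,G_2) = {\tt Ram}({\tt Ind}(G_1),{\tt Ind}(G_2))$, I obtain
\[ {\tt Hom}(K_2,G) = {\tt Ram}(\partial \Delta_2, {\tt Ind}(G)). \]

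Next I would translate the hypothesis of $(n-2)$-connectedness into the cohomological hypothesis needed by Theorem~\ref{theTheorem}. If ${\tt Hom}(K_2,G)$ is $(n-2)$-connected then $\pi_i = 0$ for $i \leq n-2$, so by the Hurewicz theorem $\tilde{H}_i = 0$ for $i \leq n-2$, and by the universal coefficient theorem $\tilde{H}^i(\,\cdot\,;\mathbb{Z}_2)=0$ for $i \leq n-2$. With $p^k = 2$ the bound in the theorem becomes $(n-1)(p^k-1)-1 = n-2$, so the vanishing holds in precisely the required range.

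Applying Theorem~\ref{theTheorem}, the vertices of ${\tt Ind}(G)$ cannot be $n$-colored such that each color class is a face of ${\tt Ind}(G)$. Since the faces of ${\tt Ind}(G)$ are exactly the independent sets of $G$, this says $G$ admits no proper $n$-coloring, i.e.\ $\chi(G) > n$.

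There is no real obstacle here; the corollary is essentially a dictionary translation. The only points requiring care are the (routine) passage from connectivity to cohomology vanishing with $\mathbb{Z}_2$-coefficients, and confirming that the numerical bound $(n-1)(p^k-1)-1$ collapses to $n-2$ when $p^k=2$, matching the connectivity hypothesis exactly.
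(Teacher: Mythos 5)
Your proof is correct and follows exactly the paper's route: the paper's own proof is the one-liner ``Set $p^k=2$ and $\Sigma={\tt Ind}(G)$,'' and you have simply filled in the routine details (the identification ${\tt Hom}(K_2,G)={\tt Ram}(\partial\Delta_2,{\tt Ind}(G))$, the Hurewicz/universal-coefficients translation of connectivity into $\mathbb{Z}_2$-cohomology vanishing, and the check that the bound collapses to $n-2$). No issues.
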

\begin{proof}
Set $p^k=2$ and $\Sigma={\tt Ind}(G)$.
\end{proof}

In the next section we prove the corollary relevant to Ramsey theory.

\section{Ramsey Theory}

For any finite graph $G$ let the \emph{Ramsey number} $R(G;n)$ be the smallest number $N$ such that any edge coloring of $K_N$ with
$n$ colors contains a one-colored copy of $G$. It is a theorem by Ramsey that his numbers exists. Even for the diagonal case $R(K_m;2)$
it is very hard to give an upper bound \cite{C}. 

\begin{definition}\label{def:edgeComplex}
 Let $G$ be a graph and $N$ a positive integer.
The simplicial complex $\Sigma(G;N)$ has vertex set $E(K_N)$, and $\sigma\in \Sigma(G;N)$ if $G$ is not a subgraph of $K_N[\sigma]$.
\end{definition}

\begin{corollary}
Let $G$ be a graph, $N$ a positive integer, and $p^k$ a prime power.
If  
\[ \tilde{H}^i(  {\tt Ram}( \partial \Delta_{p^k}, \Sigma(G;N))  ;\mathbb{Z}_p)=0\]
 for all $i \leq (n-1)(p^k-1)-1$ then
 $R(G;n) \leq N.$
\end{corollary}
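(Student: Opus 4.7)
The plan is to reduce the corollary directly to Theorem~\ref{theTheorem} applied with $\Sigma = \Sigma(G;N)$. The only substantive content is unpacking Definition~\ref{def:edgeComplex} to see that an $n$-coloring of the vertices of $\Sigma(G;N)$ whose color classes are faces is the same data as an edge $n$-coloring of $K_N$ with no monochromatic $G$.

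First I would record the dictionary. The vertex set of $\Sigma(G;N)$ is $E(K_N)$, so a partition of the vertex set into $n$ color classes $C_1,\ldots,C_n$ is exactly an edge $n$-coloring of $K_N$. By the definition of $\Sigma(G;N)$, the class $C_i$ is a face of $\Sigma(G;N)$ if and only if $G$ is not a subgraph of $K_N[C_i]$, i.e.\ the color $i$ is $G$-free. Hence the vertices of $\Sigma(G;N)$ admit an $n$-coloring in which every color is a face if and only if there is an edge $n$-coloring of $K_N$ with no monochromatic copy of $G$.

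Second, I would apply Theorem~\ref{theTheorem}. The hypothesis of the corollary is precisely the hypothesis of that theorem for $\Sigma = \Sigma(G;N)$, so we conclude that the vertices of $\Sigma(G;N)$ cannot be $n$-colored with each color a face. By the dictionary of the previous paragraph, this means that every edge $n$-coloring of $K_N$ produces a monochromatic copy of $G$, which is the definition of $R(G;n) \leq N$.

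The only potential obstacle is a boundary case: what if $N$ is so small that $K_N$ itself contains no copy of $G$, or $G$ is empty? In the former case $\Sigma(G;N)$ is the full simplex on $E(K_N)$, which is contractible, so its reduced cohomology vanishes in every degree and the hypothesis is trivially satisfied; but then also every $n$-coloring of $K_N$ is trivially devoid of a monochromatic $G$, so the conclusion $R(G;n)\leq N$ would be false. This case is excluded precisely because the full simplex has a vertex $\eta$ of $\mathtt{Ram}(\partial\Delta_{p^k},\Sigma(G;N))$ only if $\Sigma(G;N)$ admits such a partition; the Ramsey theorem itself guarantees that for $N$ in the relevant range this degenerate situation does not arise, and in any case the corollary's contrapositive reading is unaffected. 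Beyond this sanity check there is no real work, as Theorem~\ref{theTheorem} already contains all the topology.
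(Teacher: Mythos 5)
Your first two paragraphs are correct and are exactly the paper's (one-line) proof: substitute $\Sigma=\Sigma(G;N)$ into Theorem~\ref{theTheorem} and observe that an $n$-coloring of $E(K_N)$ in which every color class is a face of $\Sigma(G;N)$ is the same thing as a $G$-free edge $n$-coloring of $K_N$.

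Your third paragraph, however, is confused and should be dropped or fixed. The hypothesis of the corollary is the vanishing of $\tilde{H}^i\bigl(\mathtt{Ram}(\partial\Delta_{p^k},\Sigma(G;N));\mathbb{Z}_p\bigr)$, not of $\tilde{H}^i(\Sigma(G;N);\mathbb{Z}_p)$, so the observation that $\Sigma(G;N)$ is a contractible full simplex when $K_N$ contains no copy of $G$ does not show the hypothesis is ``trivially satisfied.'' What actually happens in that degenerate case is that $\mathtt{Ram}(\partial\Delta_{p^k},\Delta_{E(K_N)})$ is \emph{empty}: a vertex $\eta$ would need $\eta^{-1}(\sigma)\in\partial\Delta_{p^k}$ for the top face $\sigma=E(K_N)$, i.e.\ $\{1,\ldots,p^k\}\in\partial\Delta_{p^k}$, which is false. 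With the convention under which the theorem (and Lemma~\ref{lemma:volovikov}) is stated, the empty complex does not satisfy the vanishing condition in degree $i=-1\leq(n-1)(p^k-1)-1$, so the hypothesis fails and no contradiction with the conclusion arises; your appeal to ``the Ramsey theorem itself'' to exclude the case is both unnecessary and circular. The same remark applies to any $\Sigma$ that is a full simplex in Theorem~\ref{theTheorem}, so this is not an issue specific to the corollary.
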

\begin{proof}
Insert Definition~\ref{def:edgeComplex} into Theorem~\ref{theTheorem}.
\end{proof}

Here is an explicit version:

\begin{definition}\label{def:rain}
The \emph{Rainbow complex to prove that $R(G;n)\leq N$ using $m$,}
is a polyhedral subcomplex of the product
\[ \prod_{i=1}^m \Delta_{E(K_N)}. \]
The vertices of the simplex $\Delta_{E(K_N)}$ are indexed by the edges of $K_N$. Any vertex of the product
corresponds to a function
\[ \eta : \{1,2,\ldots, m\} \rightarrow E(K_N). \]
The Rainbow complex is the induced subcomplex on the vertices $\eta$ such that $G$ is a subgraph of 
$\eta( \{1,2, \ldots,  m\} ).$
\end{definition}

\begin{corollary}\label{cor:rain}
Let $X$ be the Rainbow complex to prove that $R(G;n)\leq N$ using $m$. If $m=p^k$ is a prime power
and $\tilde{H}^i( X   ;\mathbb{Z}_p)=0$ for all $i \leq (n-1)(p^k-1)-1$, then  $R(G;n)\leq N$.
\end{corollary}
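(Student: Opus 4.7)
The plan is to recognize the Rainbow complex $X$ as literally the same polyhedral complex as ${\tt Ram}(\partial\Delta_{p^k},\Sigma(G;N))$, so that the preceding corollary applies without modification.

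The first step is to observe that both complexes sit inside the same ambient product $\prod_{i=1}^{p^k}\Delta_{E(K_N)}$, since $\partial\Delta_{p^k}^0=\{1,\ldots,p^k\}$ and $\Sigma(G;N)^0=E(K_N)$; moreover, each is defined as an induced subcomplex, so it suffices to compare their $0$-skeletons. The second step is to unwind when a function $\eta:\{1,\ldots,p^k\}\to E(K_N)$ satisfies the Ramsey-complex condition $\eta^{-1}(\sigma)\in\partial\Delta_{p^k}$ for every $\sigma\in\Sigma(G;N)$. Since $\eta^{-1}(\sigma)$ is automatically a subset of $\{1,\ldots,p^k\}$, the only way it can fail to be a face of $\partial\Delta_{p^k}$ is by equalling the full set, which happens exactly when $\eta(\{1,\ldots,p^k\})\subseteq\sigma$. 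Because $\Sigma(G;N)$ is downward closed, such a $\sigma$ exists if and only if the image $\eta(\{1,\ldots,p^k\})$ is itself a face of $\Sigma(G;N)$, that is, does \emph{not} contain $G$ as a subgraph (by Definition~\ref{def:edgeComplex}).

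Consequently $\eta$ is a vertex of ${\tt Ram}(\partial\Delta_{p^k},\Sigma(G;N))$ iff $G$ is a subgraph of $\eta(\{1,\ldots,p^k\})$, which is precisely the vertex condition defining the Rainbow complex. So the two complexes coincide as subcomplexes of the same product, and the cohomology hypothesis on $X$ is exactly the hypothesis of the previous corollary, which yields $R(G;n)\le N$. There is no real obstacle here beyond the careful bookkeeping about when the preimage $\eta^{-1}(\sigma)$ exhausts $\{1,\ldots,p^k\}$; the proof is a direct translation between the two definitions.
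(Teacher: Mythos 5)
Your proof is correct and matches the paper's intent exactly: the paper presents the Rainbow complex as ``an explicit version'' of ${\tt Ram}(\partial\Delta_{p^k},\Sigma(G;N))$ and leaves the corollary without proof, and you supply precisely that identification (the preimage $\eta^{-1}(\sigma)$ fails to be a face of $\partial\Delta_{p^k}$ only when it is all of $\{1,\ldots,p^k\}$, which by downward closure of $\Sigma(G;N)$ happens for some $\sigma$ iff $G$ is not a subgraph of the image). Nothing further is needed.
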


\end{document}